\documentclass[12pt,reqno]{article}
\usepackage[margin=18mm]{geometry}
\usepackage{amssymb,amsmath,amsthm,latexsym, bbm,thmtools, dsfont}
\usepackage{xcolor}
\usepackage{enumerate}
\usepackage{graphicx}
\usepackage{mathrsfs}
\usepackage[mathscr]{euscript}
\usepackage{amscd}
\usepackage[english]{babel}
\usepackage{stmaryrd}
\usepackage[titletoc]{appendix}
\usepackage[colorlinks=true, linkcolor=blue, citecolor=blue]{hyperref}
\allowdisplaybreaks

\usepackage[backgroundcolor=white, bordercolor=blue,
linecolor=blue]{todonotes}
\usepackage[normalem]{ulem}

\usepackage[notref,notcite]{showkeys}

\newtheorem{thm}{Theorem}[section]

\newtheorem{lemma}[thm]{Lemma}
\newtheorem*{theorem*}{Theorem}

\theoremstyle{definition}
\newtheorem{defn}[thm]{Definition}

\newtheorem{remark}[thm]{Remark}

\newtheorem{example}[thm]{Examples}
\addto\extrasenglish{}
\addto\extrasenglish{}
\numberwithin{equation}{section}

\usepackage{color}

\def\R{\mathbb R}

\def\E{\mathscr{E}}
\def\D{\mathscr{D}}

\def\de{\end{equation}}

\def\edar{\end{eqnarray}}

\def\l{\left}\def\r{\right}

\def\Ent{\text {\rm Ent}}

\def\var{\text {\rm Var}}

\def\Ent{{\rm Ent}}

\def\[{\l[} \def\]{\r]}
\def\({\l(} \def\){\r)}

\title{\bf  {A note on the equivalence of super-Poincar\'e inequality}}

\author{Xin Chen
\qquad Qiuchen Yang}

\author{
 Xin Chen,
  \and
   Qiuchen Yang 
}

\date{}

\begin{document}
\allowdisplaybreaks
 \maketitle


\begin{abstract}
In this paper we will study the equivalence between super-Poincar\'e inequality and some log-Sobolev type inequalities, including weak log-Sobolev inequality
and super log-Sobolev inequality. The explicit relations between associated rate functions will also be established.
\end{abstract}

{\bf Keywords:} Super-Poincar\'e inequality, weak log-Sobolev inequality, super log-Sobolev inequality, rate function.




\section{Introduction}
Let $(\Omega,\mathscr{F},\mu)$ be a probability space, and suppose that $(\E,\D(\E))$ is a
symmetric Dirichlet form on $L^2(\mu):=L^2(\Omega;\mu)$ such that 
\begin{equation}\label{e1-2a}
\E(f,1)=0,\ \ \forall\ f\in \D(\E). 
\end{equation}

For every $f\in L^2(\mu)$, let $\mu(f):=\int_{\Omega}f d\mu$
be the integral of $f$ with respect to probability measure $\mu$. The following log-Sobolev
inequality (written as LSI) was introduced by Gross \cite{G},
\begin{equation*}
\Ent_{\mu}(f^2)\le C_0\E(f,f),\ f\in \D(\E),
\end{equation*}
where $\Ent_{\mu}(f^2)$ is
the entropy of $f^2$ with respect to $\mu$, i.e.,
\begin{equation*}
\Ent_\mu(f^2):=\mu\left(f^2\log f^2\right)-\mu(f^2)\log \mu(f^2).
\end{equation*}

The LSI has been widely applied to study various different properties, including
the convergence rate of the entropy for corresponding semi-group, the contractivity for corresponding semi-group,
the rigidity and geometric property of based manifold and so on. We refer the reader
to monographs \cite{GZ,Le,30b} for more details about the subjects related to LSI.

During the development of the theory about LSI, many variations of LSI have also been investigated.
Now we will introduce several different
types of inequalities for $\mu$ and $(\E,\D(\E))$, which can be viewed as extensions of LSI.
\begin{defn}
\begin{itemize}
\item [(1)] We say that \emph{super-Poincar\'e inequality} (written as SPI) holds for $\mu$ and $(\E,\D(\E))$ with
associated rate function $\beta_{{\rm SP}}(\cdot)$ if
\begin{equation}\label{e1-1}
\mu(f^2)\le s\E(f,f)+\beta_{{\rm SP}}(s)\mu(|f|)^2,\ s>0,\ f\in \D(\E),
\end{equation}
where $\beta_{{\rm SP}}:\R_+ \to \R_+$ is non-increasing.

\item [(2)] We say that \emph{super log-Sobolev inequality} (written as SLSI) holds for $\mu$ and $(\E,\D(\E))$
with associated rate function $\beta_{{\rm SL}}(\cdot)$ if
\begin{equation}\label{e1-2}
\Ent_{\mu}(f^2)\le s\E(f,f)+\beta_{{\rm SL}}(s)\mu(f^2),\ s>0,\ f\in \D(\E),
\end{equation}
where $\beta_{{\rm SL}}:\R_+ \to \R_+$ is non-increasing.
\end{itemize}

\item [(3)] We say that \emph{weak log-Sobolev inequality} (written as WLSI) holds for $\mu$ and $(\E,\D(\E))$
with associated rate function $\beta_{{\rm WL}}(\cdot)$ if
\begin{equation}\label{e1-3}
\Ent_{\mu}(f^2)\le \beta_{{\rm WL}}(s)\E(f,f)+s\|f\|_\infty^2,\ s>0,\ f\in \D(\E),
\end{equation}
where $\beta_{{\rm WL}}:\R_+ \to \R_+$ is non-increasing.

\end{defn}
These functional inequalities are closely related to properties
for Hunt processes and semigroups associated with $(\E,\D(\E))$. The super log-Sobolev inequality
was introduced by Davies and Simon \cite{DS84} to study contractivity properties
for associated semigroups. Wang \cite{W} investigated the super-Poincar\'e inequality and applied it
to characterize the emptiness of essential spectrum for associated infinitesimal
generators.  Corresponding theory for the weak Poincar\'e inequality
was developed by R\"ockenr and Wang \cite{RW} and it was used to establish
the (long time) algebraic convergence rates of associated semigroups. Cattiaux, Gentil and Guillin
\cite{WLSI} introduced the weak log-Sobolev inequality and studied the convergence
rates for entropies of associated semigroups.
See \cite{BCG08,B85,B05,BCR,BCR1,BM,Cao23,CG,entropy,GW19,GLM,MJT,W1,WW15} and
references therein for different applications of these functional inequalities.
We also refer the reader to monograph \cite{30b} for
detailed introduction on this subject.

Meanwhile, these functional inequalities can be viewed as interpolations of standard Poincar\'e inequalities and log-Sobolev inequalities. It is natural
to study the connection between them, especially the relation among rate functions. Using measure-capacity method,
\cite[Section 3.2]{WLSI} established a criterion for SPI based on WLSI, but the equivalence
between them was still unknown. The equivalence between SLSI, a kind
of Nash type inequality and Orlicz-Sobolev inequality was investigated in \cite{BM}.
\emph{In this paper, we will study direct relations between SPI and SLSI or WLSI, especially
those among rate functions}, which
are still not clear to the best of our knowledge.

Now we will give the main theorem of this paper.

\begin{thm}\label{t1-1}
The following statements hold.
\begin{itemize}
\item [(1)] Suppose that SPI \eqref{e1-1} holds with associated rate function
$\beta_{{\rm SP}}(\cdot)$ and the following weak Poincar\'e inequality holds,
\begin{equation}\label{l3-5-1a}
\mu(f^2)-\mu(f)^2\le \beta_{{\rm WP}}(s)\E(f,f)+s\|f\|_\infty^2,\ s>0,\ f\in \D(\E),
\end{equation}
where $\beta_{{\rm WP}}:\R_+ \to \R_+$ is non-increasing.
Then $\mu$ satisfies the WLSI \eqref{e1-3}
with the rate function $\beta_{{\rm WL}}:\R_+ \to \R_+$ as
\begin{equation}\label{l3-5-2a}
\beta_{{\rm WL}}(s):=
\begin{cases}
C_1\inf\left\{\sup_{n_0\le n \le k}n\xi_1(\delta^{-n+1});
\ k\ge n_0\ {\rm satisfies}\ C_2k\delta^{-k}\le s\right\},\ &s\in (0,s_0],\\
\beta_{{\rm WL}}(s_0),\ &s\in (s_0,+\infty).
\end{cases}
\end{equation}
Here $C_1$, $C_2$, $s_0$, $\delta>2$, $n_0\ge 2$ are some positive constants,
\begin{equation}\label{t1-2-1}
\xi_1(t):=\inf\left\{\frac{r}{1-t\beta_{{\rm SP}}(r)};r>0\ {\rm such\ that}\ 1-t\beta_{{\rm SP}}(r)>0\right\},\ t>0,
\end{equation}
and we use the convention that $\inf \emptyset=-\infty$.

\medskip
\item [(2)] Suppose that WLSI \eqref{e1-3} holds with associated rate function $\beta_{{\rm WL}}(\cdot)$,
and there exist $\theta>0$, $\delta\ge 2$ such that
\begin{equation}\label{l3-1-1}
\lim_{n \to \infty} \frac{\beta_{{\rm WL}}\left(\delta^{-n}n^{-\theta}\right)}{n}=0,
\end{equation}
then
the SPI \eqref{e1-1} is true with the following rate function,
\begin{equation}\label{l3-1-3}
\beta_{{\rm SP}}(s)=
\begin{cases}
C_3\inf\left\{\delta^k; k\ge n_0\ {\rm\ such\ that}\ \sup_{n\ge k}\frac{\beta_{{\rm WL}}\left(\delta^{-n}n^{-\theta}\right)}{n}\le s\right\},
\ &\ s\in (0,s_0],\\
\beta_{{\rm SP}}(s_0),\ &\ s\in (s_0,+\infty),
\end{cases}
\end{equation}
where $C_3$, $n_0$, $s_0$ are some positive constants.
\end{itemize}
\end{thm}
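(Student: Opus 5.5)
Both parts rest on a dyadic (more precisely $\delta$-adic) level-set decomposition of $f$. By the Markov property of the Dirichlet form we may assume $f\ge 0$, and by homogeneity $\|f\|_\infty=1$ in part (1). Set $f_n:=(f\wedge \delta^{-n+1}-\delta^{-n})^+$. These are normal contractions of $f$, so $f_n\in\D(\E)$, and the key structural fact is $\sum_n \E(f_n,f_n)\le \E(f,f)$. This holds for Dirichlet forms since the truncations act on disjoint level bands; for the Beurling--Deny jump part one uses $\sum_n (f_n(x)-f_n(y))^2\le (f(x)-f(y))^2$, and for the killing part $\sum_n f_n^2\le f^2$.

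\emph{Part (1).} First reduce the entropy to a sum over level sets. On $A_n:=\{\delta^{-n}<f\le \delta^{-n+1}\}$ we have $\log f^2\le 2\log\delta^{-n+1}$, and $f^2\lesssim$ a combination of $f_{n-1}^2$ and $\delta^{-2n}\mathbf 1_{\{f>\delta^{-n}\}}$. Using the variational formula $\Ent_\mu(f^2)\le \mu(f^2\log f^2)-\mu(f^2)\log\mu(f^2)$ with $\mu(f^2)$ normalized, this gives
\[
\Ent_\mu(f^2)\lesssim \sum_{n\ge n_0} n\,\mu(f_{n}^2)+ \text{(head terms } n<n_0) + \text{(tail } n>k).
\]
The tail $\sum_{n>k}\mu(f^2\mathbf 1_{f\le\delta^{-k}}\log f^{-2})\lesssim k\delta^{-2k}\le k\delta^{-k}$ produces the $s\|f\|_\infty^2$ term once $C_2k\delta^{-k}\le s$. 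The head terms are controlled by the weak Poincar\'e inequality \eqref{l3-5-1a}, which absorbs the possibility that $\mu(f^2)$ is large while the level sets are not small; this is why it is assumed. For the middle terms $n_0\le n\le k$, apply SPI to $f_n$: since $f_n\le \delta^{-n+1}$ and $f_n$ is supported on $\{f>\delta^{-n}\}$,
\[
\mu(f_n)^2\le \mu(f>\delta^{-n})\,\mu(f_n^2)\le \delta^{2n}\mu(f^2)\,\mu(f_n^2).
\]
This is where the $\xi_1$ structure enters: after normalizing $\mu(f^2)$ via the head/weak-Poincar\'e step, the factor multiplying $\beta_{\rm SP}(r)$ is of order $\delta^{-n+1}$, up to constants absorbed in $C_1$. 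Then $\mu(f_n^2)\le r\E(f_n,f_n)+t\beta_{\rm SP}(r)\mu(f_n^2)$ with $t=\delta^{-n+1}$, i.e. $\mu(f_n^2)\le \frac{r}{1-t\beta_{\rm SP}(r)}\E(f_n,f_n)$. Optimizing over $r$ gives $\mu(f_n^2)\le \xi_1(\delta^{-n+1})\E(f_n,f_n)$. Multiplying by $n$ and summing with $\sum_n\E(f_n,f_n)\le\E(f,f)$ yields $\sup_{n_0\le n\le k} n\,\xi_1(\delta^{-n+1})\,\E(f,f)$, and minimizing over admissible $k$ gives \eqref{l3-5-2a}. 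For $s>s_0$, monotonicity allows the constant extension.

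\emph{Part (2).} Given SPI's form, we need $\mu(f^2)\le s\E(f,f)+\beta\mu(f)^2$ for $f\ge0$ with $\mu(f)=1$. Decompose again, now with $f_n:=(f\wedge\delta^{n+1}-\delta^{n})^+$ at large levels $n\ge k$, and write $f^2\le \delta^{2k}\cdot(\text{low part})+\sum_{n\ge k}(\cdots)$. The low part satisfies $\mu((f\wedge\delta^k)^2)\le\delta^k\mu(f)=\delta^k$, which gives the $C_3\delta^k$ term. For a high band, apply WLSI to $g=f_n$ with $\|g\|_\infty\le\delta^{n+1}$. By Chebyshev, $\mu(g>0)\le\delta^{-n}$, so $\Ent_\mu(g^2)\ge \mu(g^2)\log(1/\mu(g>0))\ge n\log\delta\,\mu(g^2)$ by the standard entropy lower bound for functions with small support. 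Choosing the parameter $\delta^{-n}n^{-\theta}$ scaled by $\delta^{2n}$, WLSI gives
\[
n\,\mu(f_n^2)\lesssim \beta_{\rm WL}(\delta^{-n}n^{-\theta})\E(f_n,f_n)+\delta^{n}n^{-\theta}\cdot(\text{mass}),
\]
where the error is summable in $n$ thanks to $n^{-\theta}$ and Chebyshev. Hence $\mu(f_n^2)\le \frac{\beta_{\rm WL}(\delta^{-n}n^{-\theta})}{n}\E(f_n,f_n)+$ summable errors. Summing over $n\ge k$ gives coefficient $\sup_{n\ge k}\beta_{\rm WL}(\cdot)/n\le s$. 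Condition \eqref{l3-1-1} ensures that an admissible $k$ exists for every small $s$, which yields \eqref{l3-1-3}.

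The main obstacle is the bookkeeping in the decompositions. In part (1), the difficulty is controlling the cross terms between adjacent bands and making the head estimate via weak Poincar\'e compatible with the normalization used in $\xi_1$. In part (2), the difficulty is making the error terms $\delta^{2n}\cdot\delta^{-n}n^{-\theta}$ sum to something bounded by $\mu(f)^2$, rather than merely by $\|f\|_\infty$-type quantities. For this I would first try replacing $\|f_n\|_\infty^2$ with $\delta^{2n}$ and using $\mu(f>\delta^{n-1})\le\delta^{-n+1}$, absorbing the constants into $C_3$.
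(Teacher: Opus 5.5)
\textbf{There is a genuine gap, mainly in part (1), and it has two layers.}

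\emph{First layer: the level decomposition is set up the wrong way.} You normalize $\|f\|_\infty=1$ and count the levels $\delta^{-n}$ down from the top. Both estimates you need are only available when levels are counted \emph{up} from $\mu(f^2)$.
\begin{itemize}
\item Chebyshev only gives $\mu(f>\delta^{-n})\le\delta^{2n}\mu(f^2)$. This is not of order $\delta^{-n+1}$; for the top bands it need not be small at all. Having fixed $\|f\|_\infty=1$, you cannot also ``normalize $\mu(f^2)$ via the head step''.
\item The reduction $\Ent_\mu(f^2)\lesssim\sum_n n\,\mu(f_n^2)+\dots$ fails in your indexing. The weight on a band is $\log(f^2/\mu(f^2))$, not the distance to the top. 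For $f=\mathbf 1_E$ with $\mu(E)=\epsilon$, one has $\Ent_\mu(f^2)=\epsilon\log(1/\epsilon)$, but $\sum_n n\,\mu(f_n^2)=O(\epsilon)$.
\item Your ``tail'' $\{f\le\delta^{-k}\}$ contributes negatively to the entropy and needs no bound. The term $s\|f\|_\infty^2$ must instead pay for the \emph{large} values.
\end{itemize}
The working setup is the paper's. Take $\mu(f^2)=1$, $f_n=\min\{(|f|-\delta^{n/2})_+,\delta^{(n+1)/2}-\delta^{n/2}\}$ and $A_n=\{\delta^n\le f^2<\delta^{n+1}\}$. Then $\mu(f_n\neq0)\le\delta^{-n}$ gives $\mu(f_n^2)\le\xi_1(\delta^{-n})\E(f_n,f_n)$. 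On $A_n$ one has $\log f^2\le(n+1)\log\delta$ and $f^2\le c(\delta)f_{n-1}^2$, so there are no cross terms. The bands $n>k$ are bounded by $\|f\|_\infty^2(n+1)\log\delta\,\mu(A_n)$ with $\mu(A_n)\le\delta^{-n}$, which sums to $c\,k\delta^{-k}\|f\|_\infty^2$.

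\emph{Second layer: the tightening step is missing.} The above only yields a \emph{defective} inequality, with an extra $c\,\mu(f^2)$ coming from $\{f^2<\delta^{n_0}\}$, and your proposal contains no idea for removing it. The paper does it as follows.
\begin{itemize}
\item SPI together with the weak Poincar\'e inequality gives a genuine Poincar\'e inequality $\var_\mu(f)\le c\,\E(f,f)$ (R\"ockner--Wang, Prop.~1.3).
\item Rothaus' lemma gives $\Ent_\mu(f^2)\le\Ent_\mu(\hat f^2)+2\mu(\hat f^2)$ with $\hat f=f-\mu(f)$.
\item Apply the defective inequality to $\hat f$, using $\E(\hat f,\hat f)=\E(f,f)$ and $\|\hat f\|_\infty\le2\|f\|_\infty$, and bound $\mu(\hat f^2)$ by Poincar\'e.
\end{itemize}
This is exactly why $\beta_{\rm WP}$ affects only $C_1$. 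Feeding the weak Poincar\'e inequality directly into your ``head terms'' would put $\beta_{\rm WP}$ into the rate function, which the statement does not allow.

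\textbf{Part (2): the remainder is miscomputed.} The WLSI error for $f_n$ is $r\|f_n\|_\infty^2$, with no measure factor. With bands at $f\approx\delta^n$ and $r=\delta^{-n}n^{-\theta}$, this error is about $\delta^n n^{-\theta}$. That is not summable, and Chebyshev cannot supply the missing factor $\delta^{-n}$.

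The fix is to cut $f$ at $\delta^{n/2}$, i.e.\ $f^2$ at $\delta^n$, so that $\|f_n\|_\infty^2\le\delta^{n+1}$. Your normalization $\mu(f)=1$ then still works, and is in fact convenient:
\begin{itemize}
\item $\mu(f>\delta^{n/2})\le\delta^{-n/2}$ gives the weight $\frac n2\log\delta$ in the entropy lower bound.
\item The band errors are $O(n^{-1-\theta})$ and sum to a constant, i.e.\ to a multiple of $\mu(f)^2$.
\item The low part is just $\mu((f\wedge\delta^{(k+1)/2})^2)\le\delta^{(k+1)/2}\mu(f)$.
\end{itemize}
Corrected this way, your part (2) is a slightly simpler route than the paper's. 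The paper normalizes $\mu(f^2)=1$, absorbs the summed error as $\mu(f^2)/4$, and controls $\int_{B_{k+1}}f^2\,d\mu$ through the Barthe--Roberto / Barthe--Cattiaux--Roberto median lemma before absorbing another $\mu(f^2)/4$.
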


\begin{thm}\label{t1-2}
The following statements hold.
\begin{itemize}
\item [(1)] Suppose that SPI \eqref{e1-1} holds with associated rate function $\beta_{{\rm SP}}(\cdot)$ and
\begin{equation}\label{l3-5-2}
\lim_{n \to \infty}n\xi_1(\delta^{-n+1})=0,
\end{equation}
where $\xi_1(\cdot)$ is defined by \eqref{t1-2-1}.
Then the SLSI \eqref{e1-2} holds
with rate function $\beta_{{\rm SL}}(s)$ defined by
\begin{equation}\label{l3-5-3}
\beta_{{\rm SL}}(s):=
\begin{cases}
(\log \delta)(1+N_0(s)),\ &\ s\in (0,s_0],\\
\beta_{{\rm SL}}(s_0),\ &\ s\in (s_0,+\infty),
\end{cases}
\end{equation}
where
\begin{equation}\label{l3-5-3aa}
N_0(s):=\sup\left\{n\ge n_0; C_4n\xi_1\left(\delta^{-n+1}\right)> s\right\},
\end{equation}
$C_4$, $s_0$, $\delta>2$ are some positive constants.
\medskip
\item [(2)] Suppose that SLSI \eqref{e1-2} holds with associated rate function $\beta_{{\rm SL}}(\cdot)$, then
the SPI \eqref{e1-1} holds with the rate function $\beta_{{\rm SP}}(\cdot)$ as follows
\begin{equation}\label{l3-4-1}
\begin{split}
\beta_{{\rm SP}}(s):=&
C_5\inf\Bigg\{\delta^{k};k\ge n_0,\ {\rm such\ that}\
\xi_2(k \log \delta)\le C_6s\Bigg\},
\end{split}
\end{equation}
where $C_5$, $C_6$, $\delta>2$, $\gamma\in (0,1)$, $n_0\ge 2$ are some positive constants,
\begin{equation}\label{t1-2-1a}
\xi_2(t):=\inf\left\{\frac{r}{t-\beta_{{\rm SL}}(r)};r>0\ {\rm such\ that}\ t-\beta_{{\rm SL}}(r)>0\right\},
\end{equation}
and we use the convention that $\inf \emptyset=-\infty$.
\end{itemize}
\end{thm}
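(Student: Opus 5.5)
The plan is to prove both parts of Theorem \ref{t1-2} with a dyadic level-set decomposition, following the argument used for Theorem \ref{t1-1}. The two parts are handled separately.

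\emph{Part (1): SPI $\Rightarrow$ SLSI.} By homogeneity and $\E(|f|,|f|)\le\E(f,f)$ we may assume $f\ge0$ and $\mu(f^2)=1$. Fix $\delta>2$ and decompose $f$ along the levels $\delta^{n/2}$. Set $f_n:=(f-\delta^{n/2})^+\wedge(\delta^{(n+1)/2}-\delta^{n/2})$ and $A_n:=\{f^2\ge\delta^n\}$. Since $f_n\in\D(\E)$ is a normal contraction of $f$ on disjoint level bands, the Dirichlet form property gives $\sum_n\E(f_n,f_n)\le\E(f,f)$. The entropy is controlled by
\[
\Ent_\mu(f^2)\le\mu(f^2\log f^2)\le \sum_{n\ge 0}(n+1)\log\delta\,\mu\big(f^2 1_{A_n\setminus A_{n+1}}\big).
\]
We split this sum at a cutoff $N$:
\begin{itemize}
\item Terms with $n\le N$ contribute at most $(N+1)\log\delta$.
\item For $n>N$ we need $\mu(f^2 1_{A_n})$, which is comparable to $\delta^{n}\mu(A_{n})$ plus the $L^2$ norms of the $f_m$ with $m\ge n$. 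For each $f_n$, Chebyshev gives $\mu(f_n>0)\le\mu(A_n)\le\delta^{-n}$. Combining the SPI with Cauchy--Schwarz, $\mu(f_n)^2\le\mu(f_n^2)\mu(A_n)$, yields $\mu(f_n^2)(1-\delta^{-n}\beta_{\rm SP}(r))\le r\E(f_n,f_n)$. Optimizing over $r$ gives $\mu(f_n^2)\le \xi_1(\delta^{-n})\E(f_n,f_n)$.
\end{itemize}
A shift of index accounts for the $\delta^{-n+1}$ appearing in the statement. Reassembling, $\mu(f^21_{A_n})$ is controlled by a tail sum of the $\mu(f_m^2)$, where the geometric ratios from $\delta>2$ make the sum converge. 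The factor $n\log\delta$ is then absorbed as $C_4 n\xi_1(\delta^{-n+1})\E(f_n,f_n)$. Taking $N=N_0(s)$, every $n>N$ satisfies $C_4n\xi_1(\delta^{-n+1})\le s$, so the tail is at most $s\E(f,f)$. This gives $\Ent_\mu(f^2)\le s\E(f,f)+(\log\delta)(1+N_0(s))$. The hypothesis \eqref{l3-5-2} guarantees $N_0(s)<\infty$. For $s>s_0$ we extend by monotonicity.

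\emph{Part (2): SLSI $\Rightarrow$ SPI.} Again take $f\ge0$, now normalized by $\mu(f)=1$. The goal is $\mu(f^2)\le s\E(f,f)+\beta(s)$. Set $g:=(f-\delta^{k/2})^+$, whose support has measure at most $\delta^{-k/2}$ by Chebyshev, and bound $\mu(f^2)\le 2\delta^{k}+2\mu(g^2)$. A function supported on a set $B$ satisfies, by Jensen, $\Ent_\mu(g^2)\ge\mu(g^2)\log(1/\mu(B))$. This is essentially the route from the Nash inequality to SPI. Applying the SLSI to $g$ gives $\mu(g^2)(t-\beta_{\rm SL}(r))\le r\E(g,g)$ with $t\approx\log(1/\mu(B))\gtrsim k\log\delta$, hence $\mu(g^2)\le\xi_2(c\,k\log\delta)\E(f,f)$. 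Choosing the minimal $k$ with $\xi_2(k\log\delta)\le C_6 s$ gives $\beta_{\rm SP}(s)=C_5\delta^k$. To get exactly $t=k\log\delta$ rather than $\frac{k}{2}\log\delta$, we replace the threshold $\delta^{k/2}$ by $\delta^{k}$, which changes only constants.

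\emph{Main obstacle.} The hard step is the bookkeeping in part (1). One must bound $\mu(f^2 1_{A_n})$ by the truncated pieces $f_m$ without losing more than constant factors. This requires cross terms like $\delta^{n}\mu(A_{n+1})\le\mu(f_n^2)/(\sqrt\delta-1)^2$, and this is where $\delta>2$ enters. It also requires the geometric tail to be summable uniformly in $n$ against the factor $n$. The first thing to try is the pointwise estimate
\[
f^21_{A_{n+1}}\le C_\delta\sum_{m\ge n}\delta^{(m-n)}\delta^{-(m-n)}f_m^2,
\]
established by a telescoping argument on each band.
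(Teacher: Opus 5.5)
Your Part (2) contains an error that is not a matter of constants. With the normalization $\mu(f)=1$, Chebyshev only gives $\mu(f>\lambda)\le\lambda^{-1}$. So for $g=(f-\lambda)^+$ the entropy gain is $\log\lambda$, while the bulk costs $2\lambda^2$. What you actually prove is $\mu(f^2)\le 2\xi_2(\log\lambda)\E(f,f)+2\lambda^2\mu(f)^2$.

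Taking $\lambda=\delta^k$ gives the rate $2\inf\{\delta^{2k}:\ 2\xi_2(k\log\delta)\le s\}$, which is the \emph{square} of \eqref{l3-4-1}, not a constant multiple of it. Keeping $\lambda=\delta^{k/2}$ instead leaves $\xi_2(\frac k2\log\delta)$, which is not comparable to $\xi_2(k\log\delta)$ in general. For example, $\beta_{\rm SL}(r)=\frac d2\log(1/r)+c$ gives $\xi_2(t)\asymp e^{-2t/d}$. Then \eqref{l3-4-1} yields the Nash-type rate $s^{-d/2}$, whereas your argument yields $s^{-d}$.

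The repair is to normalize $\mu(f^2)=1$, as the paper does. Then $\mu(f>\delta^{k/2})=\mu(f^2>\delta^k)\le\delta^{-k}$. Your Jensen bound now gives $\Ent_\mu(g^2)\ge k\log\delta\,\mu(g^2)$ for $g=(f-\delta^{k/2})^+$, hence $\mu(g^2)\le\xi_2(k\log\delta)\E(f,f)$. The bulk is handled by $\mu((f\wedge\delta^{k/2})^2)\le\delta^{k/2}\mu(f)\le\frac14\mu(f^2)+\delta^k\mu(f)^2$. Absorbing gives $\mu(f^2)\le 4\xi_2(k\log\delta)\E(f,f)+4\delta^k\mu(f)^2$, which is exactly \eqref{l3-4-1}.

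So repaired, your single truncation is a simpler route than the paper's. The paper sums $\xi_2(n\log\delta)\E(f_n,f_n)$ over dyadic pieces $n\ge k$, only to bound every term by the $k$-th one via monotonicity. It also controls the bulk $\int_{B_{k+1}}f^2d\mu$ through the Barthe--Roberto capacity/median estimate rather than the elementary bound above.

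In Part (1) your ingredients coincide with the paper's: the pieces $f_n$, the bound $\mu(f_n^2)\le\xi_1(\delta^{-n})\E(f_n,f_n)$ from SPI plus Cauchy--Schwarz, $\sum_n\E(f_n,f_n)\le\E(f,f)$, and the cutoff at $N_0(s)$. But the bookkeeping you flag as the main obstacle is both unnecessary and, as sketched, lossy.

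Suppose, with your $A_n=\{f^2\ge\delta^n\}$, the weighted band term $(n+1)\log\delta\,\mu(f^21_{A_n\setminus A_{n+1}})$ is bounded through $\mu(f^21_{A_n})\le C\sum_{m\ge n-1}\mu(f_m^2)$. Your displayed pointwise bound has weights $\delta^{(m-n)}\delta^{-(m-n)}=1$, so there is no geometric decay. Summing over $n>N$ then puts a weight of order $m^2$ on each $\mu(f_m^2)$, and you would need $n^2\xi_1(\delta^{-n+1})\to0$ instead of \eqref{l3-5-2}. In Example \ref{ex1-1} with $\theta\in[1/2,1)$ one has $n\xi_1(\delta^{-n+1})\asymp n^{1-1/\theta}\to0$ but $n^2\xi_1(\delta^{-n+1})\not\to0$. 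So this route misses exactly the cases the theorem is meant to cover.

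The fix is the cross-term estimate you already wrote. On the band, $f^2<\delta^{n+1}$, and $f_{n-1}\equiv\delta^{(n-1)/2}(\sqrt\delta-1)$ on all of $A_n$. Hence $\mu(f^2\log f^2\,1_{A_n\setminus A_{n+1}})\le\frac{(n+1)\delta^2\log\delta}{(\sqrt\delta-1)^2}\mu(f_{n-1}^2)\le C_4n\xi_1(\delta^{-n+1})\E(f_{n-1},f_{n-1})$, which charges each band to a single piece. This is the paper's estimate \eqref{l3-5-5a}; it needs no tail sums and only $\delta>1$.
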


We will also give some remarks for our main theorems.
\begin{itemize}
\item [(i)] In Theorem \ref{t1-1} (1) we obtain an estimate for the rate function $\beta_{{\rm WL}}(\cdot)$ of WLSI from 
associated rate function $\beta_{{\rm SP}}(\cdot)$ of SPI, which
answers the question raised by \cite{WLSI} (see e.g. \cite[Page 576]{WLSI} for details).
We want to remark that although the validity of weak Poincar\'e inequality \eqref{l3-5-1a} is required
in Theorem \ref{t1-1} (1), the order of $\beta_{{\rm WL}}(\cdot)$ is independent of
the rate function $\beta_{{\rm WP}}(\cdot)$ for \eqref{l3-5-1a} (but the constant $C_1$
may depend on $\beta_{{\rm WP}}(\cdot)$), see Example \ref{ex1-1} below. In particular,
according to the proof we know that we only need \eqref{l3-5-1a}
to establish Poincar\'e inequality \eqref{l3-5-7} (which is actually ensured by \eqref{l3-5-1a} and \eqref{e1-1}).
We also refer the reader to \cite{GW} about other sufficient conditions for \eqref{l3-5-7} when
SPI \eqref{e1-1} holds.
\medskip

\item [(ii)] In Theorem \ref{t1-1} (2), a concrete estimate of $\beta_{{\rm SP}}(\cdot)$ in \eqref{e1-1} is given
through $\beta_{{\rm WL}}(\cdot)$ under the condition \eqref{l3-1-1}. Similar result, i.e., the implication of SPI from WLSI
for the local Dirichlet form on $\R^d$, has been established by \cite[Proposition 3.4]{WLSI}.
Compared with \cite[Proposition 3.4]{WLSI}, our result could cover the case for a general Dirichlet form, not only
a local Dirichlet form on $\R^d$. Moreover, our condition \eqref{l3-1-1} here is weaker than the corresponding one
in \cite[Proposition 3.4]{WLSI}, where the constant $\theta$ is chosen to be $1$.

\medskip

\item [(iii)] As mentioned above, the equivalence between SLSI and some Nash type inequalities has been studied in
\cite[Theorem 1.1]{BM}. But it seems difficult to obtain the relation for rate functions
$\beta_{{\rm SP}}(\cdot)$ and  $\beta_{{\rm SL}}(\cdot)$ directly from this result. We establish
an explicit expression between $\beta_{{\rm SP}}(\cdot)$ and  $\beta_{{\rm SL}}(\cdot)$ in Theorem \ref{t1-2}, and
a sufficient condition \eqref{l3-5-2} for SLSI based on SPI is also given. This condition is sharp
as illustrated by Example \ref{ex1-1} below.
 \end{itemize}

The following examples illustrate that our main theorems are sharp in some sense.
\begin{example}\label{ex1-1}
\begin{itemize}
\item [(1)] Suppose that SPI \eqref{e1-1} holds with rate function
\begin{equation}\label{ex1-1-1}
\beta_{{\rm SP}}(s)=\exp\left(C_1\left(1+\frac{1}{s^\theta}\right)\right),\ s>0,
\end{equation}
for some constants $C_1>0$, $\theta\in [1/2,+\infty)$.

When $\theta\in [1/2,1)$, the SLSI \eqref{e1-2} is true with the rate function
\begin{equation}\label{ex1-1-2}
\beta_{{\rm SL}}(s)=C_2(1+s^{-\frac{\theta}{1-\theta}}),\ s>0,
\end{equation}
for some constant $C_2>0$.

When $\theta\in [1,+\infty)$ and the weak Poincar\'e inequality \eqref{l3-5-1a} holds, then the WLSI \eqref{e1-3} is true with the rate function
\begin{equation}\label{ex1-1-3}
\beta_{{\rm WL}}(s)=C_3\left(1+\log^{\frac{\theta-1}{\theta}}(1+s^{-1})\right).
\end{equation}
In particular, if $\theta=1$, then standard log-Sobolev inequality holds.
\medskip

\item [(2)] Conversely, if the SLSI \eqref{e1-2} holds with rate function
$\beta_{{\rm SL}}(\cdot)$ given by \eqref{ex1-1-2} for some $\theta\in [1/2,1)$, then the SPI
\eqref{e1-1} is true with associated rate function \eqref{ex1-1-1}
(which may have a different constant $C_1$).

If the WLSI \eqref{e1-3} holds with rate function $\beta_{{\rm WL}}(\cdot)$ being \eqref{ex1-1-3} for
some $\theta\in [1,+\infty)$, then the SPI \eqref{e1-1} is true with associated rate function
defined by \eqref{ex1-1-1}.
\end{itemize}
\end{example}

\begin{remark}
Suppose that $\Omega=\R^d$ for some positive integer $d\ge 1$, $\mu$ is a probability measure
on $(\R^d,\mathscr{B}(\R^d))$ with the form $\mu(dx)=e^{-V(x)}dx$, $(\E,\D(\E))$ is a local Dirichlet form such that
\begin{equation*}
\E(f,f)=\int_{\R^d}|\nabla f(x)|^2\mu(dx),\ \ f\in C_c^1(\R^d),
\end{equation*}
and $\D(\E)=\overline{C_c^1(\R^d)}^{\sqrt{\E(\cdot)}+\|\cdot\|_{L^2(\mu)}}$.

According to \cite[Corollary 2.5]{W}, when $V(x)=-c_0|x|^{\kappa}$ for some $\kappa>1$, then SPI holds
with the following rate function
\begin{equation*}
\beta_{{\rm SP}}(s)=\exp\left(c_1\left(1+s^{-\frac{\kappa}{2(\kappa-1)}}\right)\right),\ s>0.
\end{equation*}
As explained by \cite[Section 3]{RW}, the weak Poinca\'e inequality \eqref{l3-5-1a} always holds
for such $(\E,\D(\E))$ and $\mu$. Hence by Example \ref{ex1-1}, we know that when $\kappa>2$, the SLSI \eqref{e1-2} holds with
associated rate function
\begin{equation*}
\beta_{{\rm SL}}(s)=c_1\left(1+s^{-\frac{\kappa}{\kappa-2}}\right),\ s>0,
\end{equation*}
and when $\kappa\in (0,2]$, the WLSI \eqref{e1-3} holds with following rate function,
\begin{equation*}
\beta_{{\rm WL}}(s)=c_2\left(1+\log^{\frac{2-\kappa}{\kappa}}(1+s^{-1})\right),\ s>0.
\end{equation*}
\end{remark}

\section{Proof of Main Theorems}

The proof of Theorem \ref{t1-1} and \ref{t1-2} will be split into following lemmas.

\begin{lemma}\label{l3-5}
Assume that the  SPI  \eqref{e1-1} holds.
Then the following conclusions are true.
\begin{itemize}
\item [(1)] Suppose that the weak Poincar\'e inequality \eqref{l3-5-1a} is true,
then $\mu$ satisfies the WLSI \eqref{e1-3}
with the rate function $\beta_{{\rm WL}}:\R_+ \to \R_+$ defined by \eqref{l3-5-2a}.
\medskip
\item [(2)] Furthermore, if we assume \eqref{l3-5-2} is true, then the SLSI \eqref{e1-2} holds
with rate function $\beta_{{\rm SL}}(s)$ defined by \eqref{l3-5-3}.
\end{itemize}
\end{lemma}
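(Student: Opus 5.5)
We will use the standard level-set decomposition of $f^2$ along a geometric sequence $\delta^n$, the same device used to pass from Nash/super-Poincaré type inequalities to entropy bounds. By homogeneity and since $\E(|f|,|f|)\le \E(f,f)$, it suffices to treat $f\ge 0$ with $\mu(f^2)=1$. First we record that \eqref{e1-1} together with \eqref{l3-5-1a} yields a genuine Poincaré inequality
\begin{equation}\label{l3-5-7}
\mu(f^2)-\mu(f)^2\le C_P\E(f,f),\quad f\in\D(\E).
\end{equation}
This is the standard argument: the weak Poincaré inequality gives $\E(f,f)=0\Rightarrow f$ constant, and SPI gives compactness of the resolvent, hence a spectral gap. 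This is the only place \eqref{l3-5-1a} enters, which is why it only affects the constants.

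Next, for $n\ge 1$ set
\[
f_n:=\big(f-\delta^{(n-1)/2}\big)^+\wedge\big(\delta^{n/2}-\delta^{(n-1)/2}\big),
\]
which are normal contractions of $f$, so $f_n\in\D(\E)$. Since $\mu(f^2)=1$ we have $\mu(f^2\ge\delta^{n-1})\le\delta^{-n+1}$. Applying \eqref{e1-1} to $f_n$ and using Cauchy--Schwarz,
\[
\mu(|f_n|)^2\le \mu(f_n^2)\,\mu(f_n>0)\le \delta^{-n+1}\mu(f_n^2).
\]
Hence $(1-\delta^{-n+1}\beta_{\rm SP}(r))\mu(f_n^2)\le r\E(f_n,f_n)$, and optimizing over $r$ gives
\[
\mu(f_n^2)\le \xi_1(\delta^{-n+1})\E(f_n,f_n).
\]
On $\{\delta^{n-1}\le f^2<\delta^n\}$ we have $f^2\log f^2\le n\log\delta\, f^2$. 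Moreover, for $\delta>2$ and $n\ge n_0$, the quantity $f^2\mathbf 1_{\{f^2\ge\delta^{n}\}}$ is comparable to $f_n^2$ up to constants, by the usual geometric-sum bookkeeping. Using the locality-type estimate for normal contractions on disjoint level bands, $\sum_n\E(f_n,f_n)\le\E(f,f)$ (valid for Dirichlet forms via the Beurling--Deny decomposition, or by the known layer-cake inequality), we arrive at
\[
\mu\big(f^2\log f^2\,\mathbf 1_{\{f^2\ge\delta^{k}\}}\big)\le C\sup_{n\ge k}\big(n\,\xi_1(\delta^{-n+1})\big)\,\E(f,f).
\]

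Now we split the entropy. On $\{f^2<\delta^{n_0}\}$, the Rothaus-type inequality $\Ent_\mu(f^2)\le \Ent_\mu(\tilde f^2)+2\mu(\tilde f^2)$ with $\tilde f=f-\mu(f)$, combined with \eqref{l3-5-7}, controls the low part by $C\E(f,f)$. For (1), normalize instead $\|f\|_\infty=1$ and choose $k$. The bands $n_0\le n\le k$ contribute $C\sup_{n_0\le n\le k}n\xi_1(\delta^{-n+1})\E(f,f)$. The remaining bands with $f^2\ge\delta^{k}$, after rescaling so that $\mu(f^2)=1$, are bounded trivially by $\|f\|_\infty^2\log$-terms of total size $\lesssim k\delta^{-k}\|f\|_\infty^2$. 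Choosing $k$ with $C_2k\delta^{-k}\le s$ gives \eqref{l3-5-2a}. For (2), we keep all bands $n\le N_0(s)$ and bound their contribution by $(\log\delta)(1+N_0(s))\mu(f^2)$, using $f^2\log f^2\le n\log\delta f^2$. For $n>N_0(s)$, \eqref{l3-5-3aa} gives $C_4n\xi_1(\delta^{-n+1})\le s$, so that part is at most $s\E(f,f)$; condition \eqref{l3-5-2} guarantees $N_0(s)<\infty$.

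The main obstacle is the summation step: justifying $\sum_n\E(f_n,f_n)\le\E(f,f)$ for a general, possibly nonlocal, symmetric Dirichlet form, and doing the geometric bookkeeping between $f^2$ and $f_n^2$ with $\delta>2$ so that the constants stay uniform. For the first part we would use the Beurling--Deny representation, where the jump part satisfies $\sum_n|f_n(x)-f_n(y)|^2\le|f(x)-f(y)|^2$ pointwise because the $f_n$ are increments of $f$ across disjoint bands. If that fails, we would approximate via $\E^{(t)}(f,f)=t^{-1}\mu(f(f-P_tf))$, which has the same jump structure with a symmetric kernel.
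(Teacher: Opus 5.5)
Your proposal is correct and follows essentially the same route as the paper: the same truncations (your $f_n$ is the paper's $f_{n-1}$), SPI combined with Cauchy--Schwarz and Markov to get $\mu(f_n^2)\le \xi_1(\delta^{-n+1})\E(f_n,f_n)$, the summation $\sum_n\E(f_n,f_n)\le\E(f,f)$, a trivial $\|f\|_\infty^2$ bound on the bands above $\delta^k$, Rothaus' inequality together with the Poincar\'e inequality for part (1) (the paper obtains Poincar\'e from SPI and \eqref{l3-5-1a} via \cite[Proposition 1.3]{RW}), and the split at $N_0(s)$ for part (2). The one claim to tighten is the ``comparability'' of $f^2\mathbf{1}_{\{f^2\ge\delta^n\}}$ with $f_n^2$: this fails for the whole tail, but it holds band by band, since $f_n$ is saturated on $\{\delta^n\le f^2<\delta^{n+1}\}$ and hence $\int_{\{\delta^n\le f^2<\delta^{n+1}\}}f^2\log f^2\,d\mu\le c\,n\,\mu(f_n^2)$, which is the paper's estimate \eqref{l3-5-5a} and gives your displayed bound after summing.
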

\begin{proof}
{\bf Step 1} Given $f\in \D(\E)$, choose $\delta>2$ and define
\begin{align}\label{l3-5-6a}
f_n:=\min\left\{\left(|f|-\delta^{\frac{n}{2}}\right)_+, \left(\delta^{\frac{n+1}{2}}-\delta^{\frac{n}{2}}\right)_+\right\},\ \ \forall\ n\ge 0.
\end{align}
According to \cite[Lemma 3.3.2]{30b} and the proof of \cite[Theorem 3.3.3]{30b}
(see also the proof of \cite[Lemma 2.2, Theorem 2.3]{W}), it holds that
\begin{equation}\label{l3-5-4}
\sum_{n=0}^\infty\E(f_n,f_n)\le \E(f,f).
\end{equation}
For every $n\ge 0$, let $A_n:=\{x \in \Omega; \delta^n \le f^2(x)< \delta^{n+1}\}$, $B_n:=\{x\in \Omega; f^2(x)< \delta^n\}$.

Hence by \eqref{e1-1} we have
\begin{align}\label{l3-5-4a}
\int_{\Omega}f_n^2 d\mu \le s\E(f_n,f_n)+\beta_{{\rm SP}}(s)\left(\int_{\Omega}|f_n|d\mu\right)^2,\ s>0.
\end{align}
Note that $f_n(x)\neq 0$ only if $x\in B_n^c$, using Cauchy-Schwarz inequality, it holds
that
\begin{align*}
\left(\int_{\Omega}|f_n|d\mu\right)^2&\le \mu(B_n^c)\left(\int_{\Omega}f_n^2 d\mu\right)
\le \delta^{-n}\left(\int_{\Omega}f_n^2 d\mu\right),
\end{align*}
where we have used the property $\mu(B_n^c)\le \delta^{-n}\mu(f^2)=1$, which follows from the Markov inequality and
the fact $\mu(f^2)=1$. So according to this and \eqref{l3-5-4a} we derive
\begin{align}\label{l3-5-5}
\int_{\Omega}f_n^2 d\mu\le \xi_1(\delta^{-n})\E(f_n,f_n),
\end{align}
where $\xi_1(\cdot)$ is defined by \eqref{t1-2-1}. In particular, by definition we know
that there exists a $n_0\ge 0$ such that $\xi_1(\delta^{-n})>0$ for every $n\ge n_0-1$.

Therefore by definition of $A_n$ we have for every $n\ge n_0$
\begin{equation}\label{l3-5-5a}
\begin{split}
\int_{A_n}f^2\log f^2d\mu&\le \delta^{n+1}\log\left(\delta^{n+1}\right)\mu(A_n)= \frac{\delta^{n+1}\log\left(\delta^{n+1}\right)}
{\left(\delta^{\frac{n}{2}}-\delta^{\frac{n-1}{2}}\right)^2}
\int_{A_n}f_{n-1}^2 d\mu\\
&\le \frac{\delta^{n+1}\log\left(\delta^{n+1}\right)}
{\left(\delta^{\frac{n}{2}}-\delta^{\frac{n-1}{2}}\right)^2}\int_{\Omega}f_{n-1}^2d\mu
\le c_1n\xi_1(\delta^{-n+1})\E(f_{n-1},f_{n-1}).
\end{split}
\end{equation}
Here the second step is due to the fact that $f_{n-1}= \left(\delta^{\frac{n}{2}}-\delta^{\frac{n-1}{2}}\right)^2$
on $A_n$, the last step follows from \eqref{l3-5-5}.

Hence for every $k\ge n_0$ we derive
\begin{equation*}
\begin{split}
\sum_{n=n_0}^k \int_{A_n}f^2\log f^2d\mu&\le
c_1\left(\sup_{n_0\le n\le k}n\xi_1(\delta^{-n+1})\right)\sum_{n=n_0}^{k}\E(f_{n-1},f_{n-1})\\
&\le c_1\left(\sup_{n_0\le n\le k}n\xi_1(\delta^{-n+1})\right)\E(f,f),
\end{split}
\end{equation*}
where the last step is due to \eqref{l3-5-4}.

Meanwhile by direct computation it holds that
\begin{equation*}
\begin{split}
\sum_{n=k+1}^{\infty} \int_{A_n}f^2\log f^2d\mu&\le
\|f\|_\infty^2\cdot \left(\sum_{n=k+1}^{\infty}\log (\delta^{n+1})\mu(A_n)\right)\\
&\le c_2(\delta)\|f\|_\infty^2\left(\sum_{n=k+1}^\infty(n+1)\frac{\mu(f^2)}{\delta^{n}}\right)
\le c_3k\delta^{-k}\|f\|_\infty^2,
\end{split}
\end{equation*}
where in the last step we have used the fact $\mu(f^2)=1$.

Also note that
\begin{align*}
\int_{B_{n_0}}f^2\log f^2d\mu\le n_0\log\delta\int_{\Omega}f^2d\mu.
\end{align*}

Therefore putting all above estimates together, we know that for every $f\in \D(\E)$ with $\mu(f^2)=1$ and every $k\ge n_0$,
\begin{equation}\label{l3-5-6}
\begin{split}
\Ent_{\mu}(f^2)&=\int_{\Omega}f^2\log f^2d\mu=
\int_{B_{n_0}}f^2\log f^2d\mu+\sum_{n=n_0}^\infty\int_{A_n}f^2\log f^2d\mu\\
&\le c_1\left(\sup_{n_0\le n\le k}n\xi_1(\delta^{-n+1})\right)\E(f,f)+c_3k\delta^{-k}\|f\|_\infty^2+c_4\int_{\Omega}f^2d\mu.
\end{split}
\end{equation}
Note that $\Ent_{\mu}(c f)=c\Ent_{\mu}(f)$ for every $c>0$, so taking $\frac{f}{\sqrt{\mu(f^2)}}$ in \eqref{l3-5-6} we know
it still holds for each $f\in \D(\E)$ (without the requirement $\mu(f^2)=1$).

Combining the weak Poinca\'e inequality \eqref{l3-5-1a} with SPI
\eqref{e1-1}, and applying \cite[Proposition 1.3]{RW} we can get the (standard) Poincar\'e inequality
for $\mu$,
\begin{equation}\label{l3-5-7}
\var_{\mu}(f)\le c_5\E(f,f),\ \forall\ f\in \D(\E).
\end{equation}

According to \cite[Proposition 6.1.1]{30b}  we have for every $f\in \D(\E)$,
\begin{align*}
\Ent_{\mu}(f^2)\le \Ent_{\mu}(\hat f^2)+2\mu(\hat f^2),
\end{align*}
where $\hat f:=f-\mu(f)$.

Since $\var_{\mu}(f)=\mu(\hat f^2)$, applying \eqref{l3-5-6}, \eqref{l3-5-7}, and using \eqref{e1-2a} we get for every $k\ge n_0$,
\begin{align*}
\Ent_{\mu}(f^2)&\le \Ent_{\mu}(\hat f^2)+2\mu(\hat f^2)\\
&\le \left(c_1\left(\sup_{n_0\le n\le k}n\xi_1(\delta^{-n+1})\right)+c_5(1+c_4)\right)\E(f,f)+2c_3k\delta^{-k}\|f\|_\infty^2.
\end{align*}
By this inequality we can prove the WLSI with desired rate function $\beta_{{\rm WL}}(s)$
defined by \eqref{l3-5-2a}.

{\bf Step 2} Now we assume that \eqref{l3-5-2} holds. Then it is easy to verify that
$N_0(s)$ is well-defined by \eqref{l3-5-3aa} (the set to take infimum is not empty) if
$s\in (0,s_0]$ for some $s_0>0$ (small enough).

Hence by \eqref{l3-5-5a} we have
\begin{equation*}
\int_{A_n}f^2\log f^2d\mu\le s\E(f_{n-1},f_{n-1}),\ \ \forall\ n\ge N_0(s),
\end{equation*}
which implies that
\begin{equation*}
\sum_{n=N_0(s)+1}^\infty \int_{A_n}f^2\log f^2d\mu\le s\sum_{n=N_0(s)+1}^\infty\E(f_{n-1},f_{n-1})
\le s\E(f,f),
\end{equation*}
where the last step follows from \eqref{l3-5-4}.

By direct computation,
\begin{equation*}
\begin{split}
\int_{B_{N_0(s)}}f^2\log f^2d\mu&\le \log \delta(1+N_0(s))\int_{\Omega}f^2d\mu.
\end{split}
\end{equation*}
Combining all above estimates yields that for every
$f\in \D(\E)$ with $\mu(f^2)=1$,
\begin{align*}
\Ent_{\mu}(f^2)&=\int_{\Omega}f^2\log f^2d\mu
=\int_{B_{N_0(s)}}f^2\log f^2d\mu+\sum_{n=N_0(s)+1}^\infty\int_{A_n}f^2\log f^2d\mu\\
&\le s\E(f,f)+\log \delta(1+N_0(s))
\int_{\Omega}f^2d\mu.
\end{align*}
So we have established SLSI with rate function defined by \eqref{l3-5-3}.
\end{proof}

\begin{lemma}\label{l3-1}
Suppose that WLSI \eqref{e1-3} holds and there exist $\theta>0$, $\delta\ge 2$ such that
\eqref{l3-1-1} is true, then the SPI \eqref{e1-1} holds with rate function defined by \eqref{l3-1-3}.
\end{lemma}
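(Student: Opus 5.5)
We prove Lemma~\ref{l3-1} by a level-set decomposition in the spirit of Step 1 of Lemma~\ref{l3-5}, now running in the opposite direction: the WLSI is applied to truncated pieces of $f$, and the SPI is obtained by summing the resulting bounds over dyadic layers.

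\textbf{Reductions.} Since $\E(|f|,|f|)\le \E(f,f)$, we may assume $f\ge 0$. By homogeneity we may normalise $\mu(f)=1$. Fix $\delta\ge 2$ as in \eqref{l3-1-1} and set
\begin{equation*}
g_n:=\min\left\{(f-\delta^{n})_+,\ \delta^{n+1}-\delta^{n}\right\},\qquad n\ge 0 .
\end{equation*}
The same argument as for \eqref{l3-5-4} (cf.\ \cite[Lemma 3.3.2]{30b}) gives $\sum_n\E(g_n,g_n)\le \E(f,f)$. By Markov's inequality, $\mu(f>\delta^n)\le \delta^{-n}$. Each $g_n$ satisfies $\|g_n\|_\infty\le \delta^{n+1}$, vanishes off a set of measure at most $\delta^{-n}$, and equals its maximum $\delta^{n+1}-\delta^n$ on $\{f\ge \delta^{n+1}\}$.

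\textbf{Key estimate: WLSI forces $L^2$ bounds for small-support functions.} Let $g\ge0$ with $\mu(g>0)\le \varepsilon$. Applying the entropy variational formula with the test function $\log(1/\varepsilon)\mathbbm 1_{\{g>0\}}$, one gets
\begin{equation*}
\Ent_\mu(g^2)\ge \mu(g^2)\log\frac{1}{\varepsilon}-\mu(g^2)\log\bigl(1+\varepsilon(1/\varepsilon-1)\bigr)\ge \mu(g^2)\left(\log\frac1\varepsilon-\log 2\right).
\end{equation*}
With $\varepsilon=\delta^{-n}$, combining this with \eqref{e1-3} at the scale $s=\delta^{-n}n^{-\theta}$ yields
\begin{equation*}
\mu(g_n^2)\le \frac{c}{n}\Bigl(\beta_{\rm WL}(\delta^{-n}n^{-\theta})\,\E(g_n,g_n)+\delta^{-n}n^{-\theta}\delta^{2(n+1)}\Bigr).
\end{equation*}
The second term is of size $\delta^{n}n^{-1-\theta}$. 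The exponent $1+\theta>1$ is exactly what \eqref{l3-1-1} is designed to make summable after weighting, which explains the role of $\theta$.

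\textbf{Reassembling $f$.} Next, bound $\mu(f^2)$ by summing over layers. On $\{\delta^{n+1}\le f<\delta^{n+2}\}$ we have $f^2\le c\,g_n^2$. This gives
\begin{equation*}
\mu(f^2)\le c\,\delta^{2k}+c\sum_{n\ge k}\mu(g_n^2)\le c\,\delta^{2k}+c\sup_{n\ge k}\frac{\beta_{\rm WL}(\delta^{-n}n^{-\theta})}{n}\,\E(f,f)+c\sum_{n\ge k}\frac{\delta^n}{n^{1+\theta}}.
\end{equation*}

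\textbf{Main obstacle.} The last sum diverges, so the crude bound $\|g_n\|_\infty\le\delta^{n+1}$ in the WLSI is too lossy. The plan is to improve it by using layers that are thinner in measure. Instead of Markov's bound, use $\mu(f>\delta^n)\le \delta^{-2n}\mu(f^2)$. Since $\|g_n\|^2_\infty\mu(g_n>0)\lesssim \mu(f^2)$, the error term becomes $\delta^{-n}n^{-\theta}\cdot\delta^{2n}\cdot(\text{measure factor})$, and the relevant support measure is $\delta^{-2n}\mu(f^2)$. Redoing the entropy lower bound with $\varepsilon=\delta^{-2n}\mu(f^2)$, which costs only a constant factor in $n$ once $n$ is large relative to $\log\mu(f^2)$, the error becomes $\mu(f^2)\sum_{n\ge k}n^{-1-\theta}\le c\,k^{-\theta}\mu(f^2)$. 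For $k\ge n_0$ large, this is at most $\tfrac12\mu(f^2)$ and can be absorbed into the left-hand side. The regime where $n\lesssim\log\mu(f^2)$ is handled in the same way, since there $\delta^{2n}\lesssim\mu(f^2)^{C}$, together with a bootstrap on $\mu(f^2)$.

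\textbf{Conclusion.} After absorption we obtain
\begin{equation*}
\mu(f^2)\le 2c\sup_{n\ge k}\frac{\beta_{\rm WL}(\delta^{-n}n^{-\theta})}{n}\,\E(f,f)+C\delta^{2k}\mu(f)^2 .
\end{equation*}
Choosing the smallest $k$ for which the supremum is at most $s$ gives \eqref{l3-1-3}, up to replacing $\delta$ by $\delta^2$ and adjusting constants; hypothesis \eqref{l3-1-1} guarantees such a $k$ exists for every $s\le s_0$. For $s>s_0$, the SPI follows by monotonicity. I expect the absorption step above, which requires controlling the error sum at the right scale, to be the delicate point.
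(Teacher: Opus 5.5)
There is a genuine gap. Your architecture matches the paper's: cut into level layers, bound each layer's $L^2$ mass by its entropy via the variational formula, apply the WLSI at scale $r_n$, sum using $\sum_n\E(f_n,f_n)\le\E(f,f)$, and absorb. But the step you flag as the main obstacle is the whole content of the lemma, and your resolution of it does not work as written.

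First, the WLSI error term is $r\|g_n\|_\infty^2$ and carries no measure factor. Replacing Markov's bound by $\mu(f>\delta^n)\le\delta^{-2n}\mu(f^2)$ therefore improves only the entropy gain $\log(1/\varepsilon_n)$, not the error. To get an error of order $\mu(f^2)n^{-1-\theta}$ you must change the scale to $r=\delta^{-2n}n^{-\theta}\mu(f^2)$, and then use $\mu(f^2)\ge\mu(f)^2=1$ together with the monotonicity of $\beta_{\rm WL}$.

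Second, under your normalisation $\mu(f)=1$ the gain $2n\log\delta-\log\mu(f^2)$ is not comparable to $n$ uniformly in $f$. For layers with $\delta^n\lesssim\mu(f^2)$ the bound with $\varepsilon=\delta^{-2n}\mu(f^2)$ is vacuous, so these layers cannot be ``handled in the same way''. The ``bootstrap on $\mu(f^2)$'' is never specified.

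The position of the cut also matters. If the low regime extends to $\delta^{2n}\lesssim\mu(f^2)^C$ with $C>2$, the Markov-based errors $\delta^n n^{-1-\theta}$ add up to order $\mu(f^2)^{C/2}$ (up to logarithms). That cannot be absorbed into $\frac12\mu(f^2)$.

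The missing idea is to measure levels relative to $\mu(f^2)^{1/2}$ rather than $\mu(f)$. The paper normalises $\mu(f^2)=1$ and cuts $f^2$, not $f$, at $\delta^n$, using the layers \eqref{l3-5-6a}.
\begin{itemize}
\item The $n$th layer is supported in $\{f^2\ge\delta^n\}$, which has measure at most $\delta^{-n}$, so the entropy gain is $n\log\delta$ for every $f$.
\item Since $\|f_n\|_\infty^2\le\delta^n(\delta^{1/2}-1)^2$, the choice $r=\delta^{-n}n^{-\theta}$ gives an error $\lesssim n^{-1-\theta}$. This sums to $ck^{-\theta}\le\frac14\mu(f^2)$ for $k\ge n_0$.
\item The price is that the bottom part is no longer automatically of the form $C\delta^k\mu(|f|)^2$. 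One needs $\int_{\{f^2<\delta^{k+1}\}}f^2d\mu\le\delta^{(k+1)/2}\mu(|f|)\le\frac14\mu(f^2)+\delta^{k+1}\mu(|f|)^2$. The paper gets this from the Barthe--Roberto capacity lemma, though the elementary inequality $b\le\frac14+b^2$ suffices.
\end{itemize}

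Alternatively, you can keep $\mu(f)=1$ if you split exactly at $\delta^n=\mu(f^2)$.
\begin{itemize}
\item For $k\le n$ with $\delta^n\le\mu(f^2)$, keep $\varepsilon_n=\delta^{-n}$ and $r=\delta^{-n}n^{-\theta}$. The errors, of order $\delta^{n}n^{-1-\theta}$, form a geometric series dominated by its last term, with total $\lesssim\mu(f^2)k^{-1-\theta}$.
\item For $\delta^n>\mu(f^2)$, use $\varepsilon_n=\delta^{-2n}\mu(f^2)<\delta^{-n}$ and $r=\delta^{-2n}n^{-\theta}\mu(f^2)$.
\item In both regimes the coefficient of $\E(g_n,g_n)$ is at most a constant times $\beta_{\rm WL}(\delta^{-2n}n^{-\theta})/n$. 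Using level ratio $\delta^{1/2}$ for $f$ matches \eqref{l3-1-3}.
\item The bottom part is then simply $\int_{\{f<\delta^{k+1}\}}f^2d\mu\le\delta^{k+1}\mu(f)^2$.
\end{itemize}
Either version supplies the bookkeeping your proposal leaves open.
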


\begin{proof}
Choosing $\delta>2$, as before for every $f\in \D(\E)$ we define $\{f_n\}_{n\ge 0}$ by \eqref{l3-5-6a}.
And we set $A_n:=\{x \in \Omega; \delta^n \le f^2(x)< \delta^{n+1}\}$, $B_n:=\{x\in \Omega; f^2(x)< \delta^n\}$
for every $n\ge 0$.

Without loss of generality we assume that $\mu(f^2)=1$, then we can verify that
\begin{equation*}
\mu(B_n^c)\le \delta^{-n}\mu(f^2)=\delta^{-n},\ n\ge 1.
\end{equation*}

It holds that
\begin{align}\label{l3-4-7}
\int_{A_{n+1}}f^2d\mu&\le \delta^{n+1}\mu(A_{n+1})=\frac{\delta^{n+1}}{\left(\delta^{\frac{n+1}{2}}-\delta^{\frac{n}{2}}\right)^2}\int_{A_{n+1}}f_n^2d\mu,
\end{align}
where we use the fact $f(x)=\delta^{\frac{n+1}{2}}-\delta^{\frac{n}{2}}$ for every $x\in A_{n+1}$.

According to the variational formula of entropy (see e.g. 3rd line in the proof of \cite[Theorem 2.1]{WLSI}) we have
\begin{equation}\label{l3-4-3}
\Ent_{\mu}(f^2)\ge \int_{\Omega}f^2\varphi d\mu,\ \ {\rm every}\ \varphi:\Omega \to \R\ {\rm such\ that}\
\int_{\Omega}e^{\varphi}d\mu\le 1.
\end{equation}
Let
\begin{equation}\label{l3-4-3a}
\varphi_n=
\begin{cases}
n\log \delta\ &\ x\notin B_n,\\
-\infty,\ &\ x\in B_n,
\end{cases}
\end{equation}
so $\int_{\Omega}e^{\varphi_n}d\mu\le \delta^{n}\mu(B_n^c)\le 1$.

Since $f_n(x)\neq 0$ only if $x\notin B_n$,
putting such $\varphi_n$ in \eqref{l3-4-3} we obtain that
\begin{equation*}
\begin{split}
\mu(f_n^2)&\le \frac{1}{n\log \delta}\Ent_{\mu}(f_n^2)\le \frac{1}{n\log \delta}
\left(\beta_{{\rm WL}}(r)\E(f_n,f_n)+r\|f_n\|_\infty^2\right)\\
&\le \frac{1}{n\log \delta}
\left(\beta_{{\rm WL}}(r)\E(f_n,f_n)+r\delta^{n}\left(\delta^{1/2}-1\right)^2\right),\ r>0,
\end{split}
\end{equation*}
where in the second step we have applied WLSI \eqref{e1-3}, and the last step is due
to the fact that $\|f_n\|_\infty^2\le \delta^{n}\left(\delta^{1/2}-1\right)^2$.

Therefore taking $r=r_n:=\delta^{-n}n^{-\theta}$ and using \eqref{l3-4-7} we obtain that for all $k\ge 2$,
\begin{equation*}
\begin{split}
\sum_{n=k}^{\infty}\int_{A_{n+1}}f^2d\mu&\le \sum_{n=k}^\infty \frac{c_0\beta_{{\rm WL}}\left(\delta^{-n}n^{-\theta}\right)}{n}\E(f_n,f_n)
+\sum_{n=k}\frac{c_0}{n^{1+\theta}}\\
&\le c_0\left(\sup_{n\ge k}\frac{\beta_{{\rm WL}}\left(\delta^{-n}n^{-\theta}\right)}{n}\right)\E(f,f)+c_1k^{-\theta},
\end{split}
\end{equation*}
where we have used \eqref{l3-5-4} in the last step. Therefore there exists an $n_0\ge 0$ so that $c_1n_0^{-\theta}\le \frac{1}{4}$ and we obtain
that
\begin{equation}\label{l3-1-2}
\begin{split}
\sum_{n=k}^{\infty}\int_{A_{n+1}}f^2d\mu&\le c_0\left(\sup_{n\ge k}\frac{\beta_{{\rm WL}}\left(\delta^{-n}n^{-\theta}\right)}{n}\right)\E(f,f)+
\frac{1}{4}\\
&=c_0\left(\sup_{n\ge k}\frac{\beta_{{\rm WL}}\left(\delta^{-n}n^{-\theta}\right)}{n}\right)\E(f,f)+
\frac{\mu(f^2)}{4},\ \ \forall\ k\ge n_0,
\end{split}
\end{equation}
where in the last step we have used the fact $\mu(f^2)=1$.

Let $\hat f_{k}(x):=\min\{f(x),\delta^{\frac{k+1}{2}}\}$, according to \cite{BR} or \cite[Lemma 2, Lemma 3]{BCR1}, we get for every $r\ge 1$,
\begin{align*}
\mu(\hat f_k^2)-r\mu(|\hat f_k|)^2&\le
\mu(|\hat f_k-m_k|^2)-(r-1)\mu(|\hat f_k-m_k|)^2\\
&\le \sup\left\{\int_\Omega (\hat f_k-m_k)^2g d\mu; g\in \mathscr{G}_r\right\}
\end{align*}
where $m_k$ is the median of $\hat f_k$ and
\begin{align*}
\mathscr{G}_r:=\left\{g:\Omega\to [0,1); \int_\Omega \frac{1}{1-g}d\mu\le 1+\frac{1}{r-1}\right\}.
\end{align*}
Since by definition $(\hat f_k-m_k)_+\le 2\delta^{\frac{k+1}{2}}$, it holds that
\begin{align*}
\sup\left\{\int_\Omega (\hat f_k-m_k)^2g d\mu; g\in \mathscr{G}_r\right\}
&\le 2\delta^{k+1}\sup\left\{\int_\Omega g d\mu; g\in \mathscr{G}_r\right\}\\
&\le 2\delta^{k+1}\mu(\Omega)\left(1-\left(1+\frac{1}{(r-1)\mu(\Omega)}\right)^{-1}\right)\\
&\le \frac{c_2\delta^{k+1}}{r-1},\ \ r\ge 1,
\end{align*}
where the second step is due to \cite{BR} or \cite[Lemma 4]{BCR1}
(with $a=\frac{1}{2}$, $K=1+\frac{1}{r-1}$, $A=X=\Omega$).

Then putting all above estimates we arrive at that for every $k\ge n_0$ and $r\ge 1$
\begin{align*}
\int_{B_{k+1}}f^2d\mu&\le \int_{\Omega}\hat f_{k}^2d\mu
\le \frac{c_3\delta^{k}}{r-1}+r\mu(|\hat f_k|)^2\le
\frac{c_3\delta^{k+1}}{r-1}\mu(f^2)+r\mu(|f|)^2,
\end{align*}
where we have used the fact $\mu(f^2)=1$. Hence taking $r=1+4c_3\delta^{k+1}$ we obtain
\begin{align}\label{l3-1-3a}
\int_{B_{k+1}}f^2d\mu\le \frac{\mu(f^2)}{4}+c_4\delta^k\mu(|f|)^2.
\end{align}
This, together with \eqref{l3-1-2} yields that for every $k\ge n_0$,
\begin{align*}
\mu(f^2)&=\int_{B_{k+1}}f^2d\mu+\sum_{n=k}^\infty\int_{A_{n+1}}f^2d\mu\\
&\le c_5\left(\sup_{n\ge k}\frac{\beta_{{\rm WL}}\left(\delta^{-n}n^{-\theta}\right)}{n}\right)\E(f,f)+c_5
\delta^{k}\mu(|f|)^2+\frac{\mu(f^2)}{2},
\end{align*}
which implies
\begin{align*}
\mu(f^2)\le 2c_5\left(\sup_{n\ge k}\frac{\beta_{{\rm WL}}\left(\delta^{-n}n^{-\theta}\right)}{n}\right)\E(f,f)+2c_5
\delta^{k}\mu(|f|)^2,\ \ \forall\ k\ge n_0,
\end{align*}
by this we can prove immediately the SPI \eqref{e1-1} with rate function defined by \eqref{l3-1-3}. 

\end{proof}

\begin{lemma}\label{l3-4}
Suppose that $\mu$ satisfies SLSI \eqref{e1-2},
then the SPI \eqref{e1-1} holds with the rate function $\beta_{{\rm SP}}(s)$ defined by
\eqref{l3-4-1}.
\end{lemma}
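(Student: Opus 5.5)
We follow the scheme of the proof of Lemma \ref{l3-1}, with WLSI replaced by SLSI. Fix $\delta>2$ and $f\in\D(\E)$, normalized so that $\mu(f^2)=1$. Define the truncations $f_n$ by \eqref{l3-5-6a} and the sets $A_n$, $B_n$ as before, so that $\mu(B_n^c)\le\delta^{-n}$.

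\textbf{Step 1: a level-wise bound from SLSI.} We insert the test function $\varphi_n$ of \eqref{l3-4-3a} into the variational formula \eqref{l3-4-3}; this is legitimate because $f_n$ vanishes on $B_n$. Combined with SLSI \eqref{e1-2} applied to $f_n$, it gives for every $r>0$
\begin{equation*}
n\log\delta\,\mu(f_n^2)\le \Ent_\mu(f_n^2)\le r\E(f_n,f_n)+\beta_{{\rm SL}}(r)\mu(f_n^2).
\end{equation*}
Whenever $n\log\delta-\beta_{{\rm SL}}(r)>0$ we can rearrange to $\mu(f_n^2)\le \frac{r}{n\log\delta-\beta_{{\rm SL}}(r)}\E(f_n,f_n)$. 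Optimizing over $r$ yields
\begin{equation*}
\mu(f_n^2)\le \xi_2(n\log\delta)\,\E(f_n,f_n).
\end{equation*}
Note that $\xi_2(t)>0$ once $t$ is large: since $\beta_{{\rm SL}}$ is nonincreasing, the admissible set is nonempty for large $t$, and $\xi_2$ is nonincreasing in $t$. We choose $n_0$ large enough that this holds for all $n\ge n_0$.

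\textbf{Step 2: the large-value part.} By \eqref{l3-4-7}, $\int_{A_{n+1}}f^2d\mu\le c_\delta\,\mu(f_n^2)$. Summing over $n\ge k$ and using monotonicity of $\xi_2$ together with \eqref{l3-5-4} gives
\begin{equation*}
\sum_{n\ge k}\int_{A_{n+1}}f^2d\mu\le c_\delta\,\xi_2(k\log\delta)\,\E(f,f).
\end{equation*}
In contrast to the WLSI case, there is no $\|f\|_\infty$ remainder term here, so no analogue of condition \eqref{l3-1-1} is needed.

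\textbf{Step 3: the small-value part.} We reuse \eqref{l3-1-3a} verbatim; it depends only on truncation and the median/capacity argument from \cite{BCR1}, not on the functional inequality. It gives
\begin{equation*}
\int_{B_{k+1}}f^2d\mu\le \tfrac14\mu(f^2)+c_4\delta^k\mu(|f|)^2.
\end{equation*}
The $\tfrac14$ term is absorbed into the left-hand side. Adding the two parts yields
\begin{equation*}
\mu(f^2)\le c\,\xi_2(k\log\delta)\E(f,f)+c\,\delta^k\mu(|f|)^2,\qquad k\ge n_0.
\end{equation*}
Removing the normalization by homogeneity, and then, for a given $s$, taking the infimum over those $k\ge n_0$ with $\xi_2(k\log\delta)\le C_6 s$, we obtain \eqref{l3-4-1}. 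For $s$ so small that no such $k$ exists the infimum is infinite, and the bound is still trivially consistent.

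\textbf{Main obstacle.} The only delicate point is the bookkeeping in Step 1: we must ensure $\xi_2(n\log\delta)$ is finite and positive for $n\ge n_0$, which requires $n_0$ large relative to $\beta_{{\rm SL}}$. We must also handle the convention $\inf\emptyset=-\infty$ consistently. The remaining steps are routine adaptations of Lemma \ref{l3-1}.
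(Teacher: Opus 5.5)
Your proposal is correct and follows the paper's proof essentially step by step: you insert $\varphi_n$ into the variational formula and combine it with SLSI to get $\mu(f_n^2)\le \xi_2(n\log\delta)\E(f_n,f_n)$, you sum via \eqref{l3-4-7} and \eqref{l3-5-4}, and you handle the small values with \eqref{l3-1-3a} and absorb the $\tfrac14$ term. One minor point: your caveat about an empty index set is unnecessary, since for any fixed $r$ one has $\xi_2(t)\le r/(t-\beta_{{\rm SL}}(r))\to 0$ as $t\to\infty$, so the set defining \eqref{l3-4-1} is nonempty for every $s>0$; likewise, Step 1 only needs $\xi_2(n\log\delta)\in[0,\infty)$, not strict positivity.
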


\begin{proof}
Choosing a $\delta>2$, for every $f\in \D(\E)$ we define $\{f_n\}_{n\ge 0}$ according to
\eqref{l3-5-6a}. And sill let $A_n:=\{x \in \Omega; \delta^n \le f^2(x)< \delta^{n+1}\}$, $B_n:=\{x\in \Omega; f^2(x)< \delta^n\}$
for every $n\ge 0$.

Without loss of generality we assume that $\mu(f^2)=1$, so it holds that
\begin{equation*}
\mu(B_n^c)\le \delta^{-n}\mu(f^2)=\delta^{-n},\ n\ge 1.
\end{equation*}

As explained in the proof of Lemma \ref{l3-1} above, we can take $\varphi_n$ as defined in
\eqref{l3-4-3a} and apply \eqref{l3-4-3} to obtain that
\begin{equation}\label{l3-4-4a}
\begin{split}
\mu(f_n^2)&\le \frac{1}{n\log \delta}\Ent_{\mu}(f_n^2)\le \frac{1}{n\log \delta}\left(r\E(f_n,f_n)+\beta_{{\rm SL}}(r)\mu(f_n^2)\right),\ r>0,
\end{split}
\end{equation}
where the last step is due to \eqref{e1-2}.
Therefore we have
\begin{equation}\label{l3-4-4}
\mu(f_n^2)\le \inf_{r>0}\left(\frac{r}{n\log \delta-\beta_{{\rm SL}}(r)}\right)\E(f_n,f_n)=\xi_2(n \log \delta)\E(f_n,f_n).
\end{equation}
where $\xi_2(\cdot)$ is defined by \eqref{t1-2-1a}. By definition we know that there exists
an $n_0\ge 1$ such that $\xi_2(n \log \delta)>0$ for every $n\ge n_0$.

Combining this with \eqref{l3-4-7} yields that for every $k\ge n_0$
\begin{equation}\label{l3-4-6}
\begin{split}
\sum_{n=k}^{\infty}\int_{A_{n+1}}f^2d\mu&\le \sum_{n=k}^\infty c_0\xi_2(n \log \delta)\E(f_n,f_n)
\le c_1\xi_2(k \log \delta)\E(f,f),
\end{split}
\end{equation}
where we have used \eqref{l3-5-4} and the fact that $n \mapsto \xi_2(n \log \delta)$ is non-increasing.

Then combining this with \eqref{l3-1-3a}(which is still true in this case) yields that
for every $k\ge n_0$,
\begin{align*}
\mu(f^2)&=\int_{B_{k+1}}f^2d\mu+\sum_{n=k}^\infty\int_{A_{n+1}}f^2d\mu\\
&\le c_2\xi_2(k \log \delta)\E(f,f)+c_2
\delta^{k}\mu(|f|)^2+\frac{\mu(f^2)}{2}.
\end{align*}
This implies that
\begin{align*}
\mu(f^2)\le 2c_2\xi_2(k \log \delta)\E(f,f)+2c_2
\delta^{k}\mu(|f|)^2,\ k\ge k_0.
\end{align*}
Based on this we can prove that SPI \eqref{e1-1} holds with the desired rate function \eqref{l3-4-1}.
\end{proof}

\begin{proof}[Proof of Theorem \ref{t1-1} and Theorem \ref{t1-2}]
The conclusion of Theorem \ref{t1-1} is a direct consequence of Lemma \ref{l3-5} (1) and
Lemma \ref{l3-1}. And by Lemma \ref{l3-5}(2) and Lemma \ref{l3-4} we can prove Theorem \ref{t1-2} immediately.
\end{proof}

\begin{proof}[Proof of Example \ref{ex1-1}]
(1) Suppose SPI \eqref{e1-1} holds with rate function \eqref{ex1-1-1}. By definition \eqref{t1-2-1} we know that for some $t_0>0$,
\begin{equation*}
c_1\log^{-\frac{1}{\theta}}(1+t^{-1})\le \xi_1(t)\le c_2\log^{-\frac{1}{\theta}}(1+t^{-1}),\ t\le t_0,
\end{equation*}
which implies that \eqref{l3-5-2} is true when $\theta\in [1/2,1)$. By this and \eqref{l3-5-3} we get the desired rate function
\eqref{ex1-1-2} for SLSI. When $\theta\in [1,+\infty)$, applying \eqref{l3-5-2a} directly we can prove
WLSI with rate function \eqref{ex1-1-3}.

\medskip

(2) If SLSI holds with rate function \eqref{ex1-1-2} for some $\theta\in [1/2,1)$, by definition \eqref{t1-2-1a} we can find
a $t_1>0$ so that
\begin{equation*}
c_3t^{-\frac{1}{\theta}}\le \xi_2(t) \le c_4t^{-\frac{1}{\theta}},\ t\ge t_1.
\end{equation*}
This, along with \eqref{l3-4-1} implies the SPI with rate function \eqref{ex1-1-1}. Meanwhile
if WLSI with rate function \eqref{ex1-1-3} for some $\theta\in [1,+\infty)$ is true,
it is easy to verify the validity of \eqref{l3-1-1}, then applying \eqref{l3-1-3} directly we
can establish SPI with the associated rate function \eqref{ex1-1-1}.
\end{proof}

\bibliographystyle{plain}

\vskip 0.3truein



\begin{thebibliography}{10}


\bibitem{BCG08}
D.~Bakry, P.~Cattiaux, and A.~Guillin.
\newblock Rate of convergence for ergodic continuous {M}arkov processes:
  {L}yapunov versus {P}oincar{\'e}.
\newblock {\em J. Funct. Anal.}, 254(3):727--759, 2008.

\bibitem{B85}
D.~Bakry and M.~{\'E}mery.
\newblock Diffusions hypercontractives.
\newblock In {\em S\'eminaire de Probabiliti\'es {XIX} 1983/84}, volume 1123 of
  {\em Lect. Notes Math.}, pages 177--206. Springer, Berlin, 1985.




\bibitem{B05}
F.~Barthe, P.~Cattiaux, and C.~Roberto.
\newblock Concentration for independent random variables with heavy tails.
\newblock {\em AMRX Appl. Math. Res. Express}, (2):39--60, 2005.


\bibitem{BCR}
F. Barthe, P. Cattiaux and C. Roberto.
\newblock Interpolated inequalities between exponential and Gaussian, Orlicz hypercontractivity and isoperimetry.
\newblock {\em Rev. Mat. Iberoam.}, {\bf 22}(3): 993--1067, 2006.

\bibitem{BCR1}
F. Barthe, P. Cattiaux and C. Roberto.
\newblock Isoperimetry between exponential and Gaussian.
\newblock {\em Electron. J. Probab.} {\bf 12}: 1212--1237, 2007.

\bibitem{BR}
F. Barthe and C. Roberto.
\newblock Sobolev inequalities for probability measures on the real line.
\newblock {\em Studia Math.} {\bf 123}(1): 81--95, 1997.


\bibitem{BM}
M. Biroli and P. Maheux.
\newblock On equivalence of super log-Sobolev and Nash type inequalities.
\newblock {\em Colloq. Math.} {\bf 137}(2): 189--208, 2014.





\bibitem{Cao23}
Y.~Cao, J.~Lu, and L.~Wang.
\newblock On explicit {$L^2$}-convergence rate estimate for underdamped
  {L}angevin dynamics.
\newblock {\em Arch. Ration. Mech. Anal.}, 247(5):90, 2023.

\bibitem{WLSI}
P.~Cattiaux, I.~Gentil, and A.~Guillin.
\newblock Weak logarithmic {S}obolev inequalities and entropic convergence.
\newblock {\em Probab. Theory Relat. Fields}, 139(3-4):563--603, 2007.

\bibitem{CG}
P. Cattiaux and A. Guillin.
\newblock Hitting times, functional inequalities, Lyapunov conditions and uniform ergodicity.
\newblock {\em J. Funct. Anal.}, {\bf 272}(6): 2361--2391, 2017.

\bibitem{entropy}
P. Cattiaux, A. Guillin, P. Monmarch\'e, and C. Zhang.
\newblock Entropic multipliers method for {L}angevin diffusion and weighted log-Sobolev inequalities. \newblock {\em J. Funct. Anal.}, 277(11):108288, 2019.






\bibitem{DS84} E. B. Davies and B. Simon.
\newblock Ultracontractivity and the heat kernel for Schr\"odinger operators and Dirichlet Laplacians.
\newblock {\em J. Funct. Anal.}, {\bf 59}(2): 335--395, 1984






\bibitem{FOT}
M. Fukushima, Y. Oshima, and M. Takeda.
\emph{Dirichlet forms and symmetric Markov processes,}
De Gruyter Stud. Math., 19
Walter de Gruyter, Berlin, 1994, x+392 pp.

\bibitem{GW}
F.-Z. Gong and F.-Y. Wang.
\newblock Functional inequalities for uniformly integrable semigroups and application to essential spectrums.
\newblock {\em Forum Math.} {\bf 14}(2): 293--313, 2002.


\bibitem{G}
L. Gross.
\newblock Logarithmic Sobolev inequalities.
\newblock {\em Amer. J. Math.} 97(4): 1061--1083, 1975.

\bibitem{GW19}
M.~Grothaus and F.-Y. Wang.
\newblock Weak {P}oincar\'e inequalities for convergence rate of degenerate
  diffusion processes.
\newblock {\em Ann. Probab.}, 47(6):2930--2952, 2019.

\bibitem{GLM}
A. Guillin, P. Le Bris, and P. Monmarch\'e.
\newblock Uniform in time propagation of chaos for the 2D vortex model and other singular stochastic systems.
\newblock {\em J. Eur. Math. Soc.}, 27(6): 2359--2386, 2025.

\bibitem{GZ}
A. Guionnet and B. Zegarlinski.
\newblock {\em Lectures on logarithmic Sobolev inequalities},
Lecture Notes in Math., {\bf 1801}
Springer-Verlag, Berlin, 1--134, 2003.







\bibitem{Le}
M. Ledoux.
\newblock {\em Concentration of measure and logarithmic Sobolev inequalities},
Lecture Notes in Math., {\bf 1709}
Springer-Verlag, Berlin, 120--216, 1999.

\bibitem{MJT} D. Matthes, A. J\"ungel and G. Toscani.
\newblock Convex Sobolev inequalities derived from entropy dissipation.
\newblock {\em Arch. Ration. Mech. Anal.}, {\bf 199}(2): 563--596, 2011.





\bibitem{RW}
M.~R\"ockner and F.-Y. Wang.
\newblock Weak {P}oincar\'e inequalities and {$L^2$}-convergence rates of
  {M}arkov semigroups.
\newblock {\em J. Funct. Anal.}, 185(2):564--603, 2001.





\bibitem{W}
F.-Y. Wang.
\newblock Functional inequalities for empty essential spectrum.
\newblock {\em J. Funct. Anal.}, 170(1):219--245, 2000.

\bibitem{W1}
F.-Y. Wang.
\newblock Functional inequalities and spectrum estimates: the infinite measure case.
\newblock {\em J. Funct. Anal.}, 194(2): 288--310, 2002.

\bibitem{30b}
F.-Y. Wang.
\newblock {\em Functional Inequalities, Markov Semigroups and Spectral Theory}.
\newblock Science Press, Beijing/New York, 2005.

\bibitem{WW15}
F.-Y. Wang. and J. Wang.
\newblock Functional inequalities for stable-like Dirichlet forms,
\newblock {\em J. Theoret. Probab.}, {\bf 28}(2): 423--448, 2015.


\end{thebibliography}

\vskip 0.3truein

{\bf Xin Chen:}
Shanghai Jiao Tong University, 200240, Shanghai, P.R. China.
E-mail: \texttt{chenxin217@sjtu.edu.cn}

\medskip
{\bf Qiuchen Yang:}
Shanghai Jiao Tong University, 200240, Shanghai, P.R. China.
E-mail: \texttt{yangqiuchen@sjtu.edu.cn}

\end{document}